\newtheorem{theorem}{Theorem}
\newtheorem{corollary}{Corollary}[theorem]
\newtheorem{lemma}[theorem]{Lemma}
\newtheorem{proposition}[theorem]{Proposition}
\newcommand{\blind}{1}
\begin{document}

\newcounter{proofpart}

\def\spacingset#1{\renewcommand{\baselinestretch}%
{#1}\small\normalsize} \spacingset{1}


\if1\blind
{
  \title{\bf A proof of Blackwell's renewal theorem by mapping to integers}
  \author{Rohit Pandey}
  \maketitle
} \fi

\if0\blind
{
  \bigskip
  \bigskip
  \bigskip
  \begin{center}
    {\LARGE\bf A simple proof of Blackwell's renewal theorem by mapping to integers}
\end{center}
  \medskip
} \fi

\begin{abstract}
This paper presents a new proof of the renewal theorem by bijecting a general point process to a deterministic one (where the time between events is always fixed). It also provides insight into the workings of the renewal theorem.
\end{abstract}

\section{Counting events}\label{sectn:count}
Imagine you go to a bus stop at some random time in the day, not with the intention of boarding any bus, but simply counting them. The time between successive bus arrivals is a random variable, but we know the expected value of this random variable is $t$. If you wait at the bus stop for an interval $u$, what should be the expected number of buses you count? Intuition says that we should count on average $\frac{u}{t}$ buses (ex. if buses take on average $10$ minutes to arrive and you wait at the stop for $30$ minutes, you'll see on average 3 buses).

This obviously doesn't always hold. If the buses arrive exactly $t=10$ minutes apart (we'll call such a process a deterministic point process) and we start our observation period at time $0$ and make its length $u=9$ minutes, the average number of arrivals we'll observe won't be $\frac{9}{10}$, it'll be zero (since we always miss the first bus which arrives at $10$ minutes from the start of the process). 

But this is, `cheating' since the interval is strategically placed at a point where the process just ``renewed" itself.
We need to start our interval in a way that it doesn't ``prefer" any of the renewals of the process.

If we manage to do this, it turns out to be a very robust result. Simulations show that it holds for a variety of inter-arrival distributions, for both renewal as well as more general processes. For instance, see listing-\ref{lst:simulator} in appendix \ref{py:code} and also the page \href{https://gist.github.com/ryu577/662f9cb593d40920e161cfef3eba0244}{here} with some Python code demonstrating this for various inter-arrival distributions.

This is a well known result in renewal theory called the renewal theorem or Blackwell's renewal theorem. See for example section 11.1 of \cite{feller} and theorem 4.6.2 of \cite{ocw_renewal}. 

This theorem is proved via a method called coupling in \cite{blackwell_proof}. In \cite{non_iid}, this is extended to processes where the inter-arrival times might not be identically distributed and correlated (which should enable one to model any arbitrary point process).

This paper will provide a new proof for this result which relies on a very simple argument that maps any point process to a deterministic point process (one where events happen every fixed, deterministic interval, $t$; ex: buses arriving at the hour, every hour or your favorite TV show airing at 7:00 PM every Saturday). Note also that a deterministic process can be re-scaled by dividing the entire time-line (all measured times) by $t$ (which is equivalent to changing the unit in which time is measured so $t$ becomes $1$ unit), causing the events to happen at every positive integer.

Let's start by defining some terminology (mostly inspired by \cite{{ocw_renewal}}) with regard to point processes. Then, we'll describe the family of point processes the deterministic point process belongs to since it plays a central role in our paper and finally, state Blackwell's renewal theorem formally.

\subsection{Terminology of point processes}\label{terminology}
Going back to our example with the buses, the $i$-th bus arrival has the time since the previous ($i-1$-th arrival) distributed as $T_i$. In general, the $T_i$'s can be correlated. If we consider them to be i.i.d, we get a renewal process. Since they describe the time until the next event, they can't be negative and are hence supported on $(0,\infty)$.

The expected value of the $T_i$'s is given by $\Bbb E(T_i) = t$. We can separate the $T_i$ into deterministic and random parts: $T_i = t+\epsilon_i$. Which means the $\epsilon_i$ are zero-mean random variables supported on $(-t, \infty)$ (since the $T_i$ can't be negative). 

The absolute time at which the $i$-th event happens is given by: $S_i = \sum\limits_{k=1}^i T_i$. Further, we arrive at the bus stop and start counting buses at time $u_1$ and continue counting until some pre-defined time, $u_2$. The length of the observation interval is $u=u_2-u_1$. This is all depicted in figure \ref{fig:pt_process_v0}. 

\begin{figure}
  \includegraphics[width=0.8\linewidth]{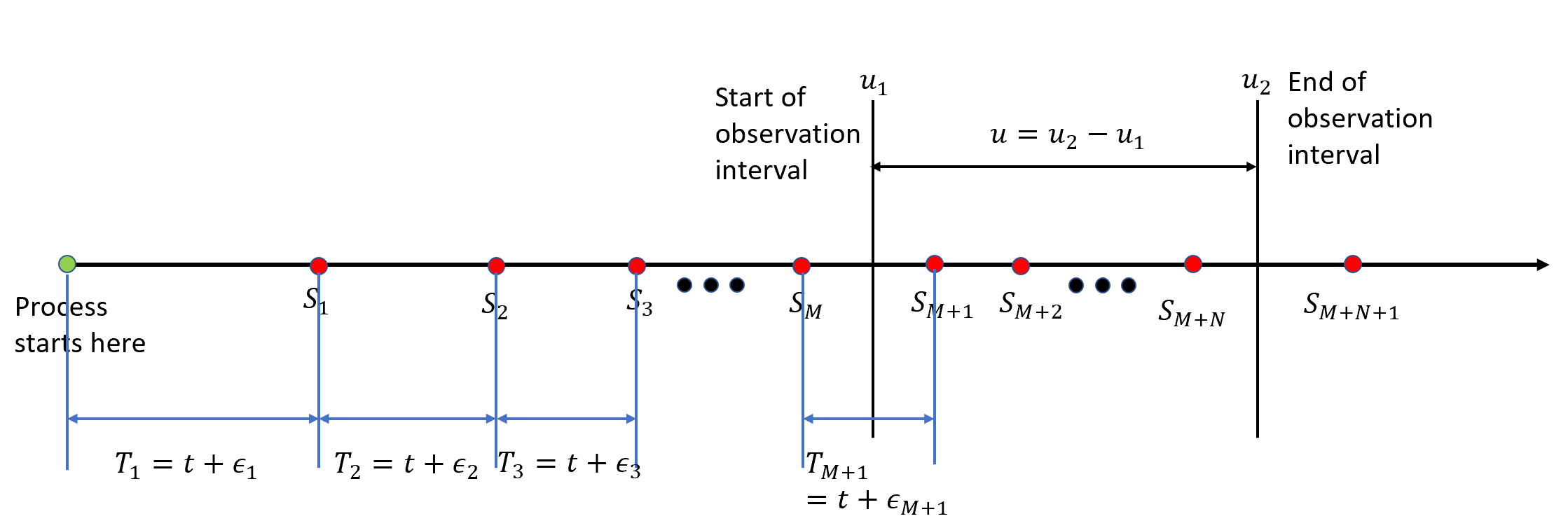}
  \caption{A basic point process with observation interval. The point-in-time events are shown as red dots on the time-line.}
  \label{fig:pt_process_v0}
\end{figure}

The number of point in time events from the point process happening from the start to some time, $s$ is another random variable and is denoted by $N(s)$. Going back to our example with the buses, the number of events from the start of the process to when we start observing it, $u_1$ is given by $N(u_1)$ and that from the start of the process to the end of our observation window is given by $N(u_2)$. Since $u_1$ is time with special significance to us, we denote $M \coloneqq N(u_1)$ and the number of events falling into our observation interval, $N(u_1 \to u_2) \coloneqq N(u_2)-N(u_1) \coloneqq N$. So, the absolute time stamps at which events happen inside our observation interval are $S_{M+1}, S_{M+2} \dots S_{M+N}$. Refer again to figure \ref{fig:pt_process_v0} for a visual depiction of all of this.

Since $N(s)$ is a random variable, we can think also of its expectation: $\mu(s) \coloneqq \Bbb E (N(s))$. The average number of points lying inside the interval between $u_1$ and $u_2$ then becomes: $\mu(u_1 \to u_2) \coloneqq \Bbb E (N(u_1 \to u_2)) =\Bbb E(N(u_2)-N(u_1)) = \mu(u_2)-\mu(u_1)$. The result this paper revolves around estimates this very quantity. 

Finally, in this paper, $U$ will always mean a uniform random number over $(0,1]$, $\vartheta$ will always mean some very large number and $V$ will mean a uniform random number over $(0,\vartheta]$ (so $V=\vartheta U$).

\subsection{Arithmetic point processes}
Imagine a point process where events can happen only at integer time stamps. And an obvious generalization of such a process happens by scaling its whole time line by some scalar, $\lambda$ (which can be interpreted simply as measuring time in different units). Now, the events happen only at multiples of the scalar, $\lambda$. This kind of process is called an ``arithmetic point process". The largest scalar for which the process follows this property is called the ``span" of the process.

The deterministic point process described before is the simplest example of this (if we scale its time line down by the span, events happen at every integer in this time line). Such processes play a special role in the result we're exploring in this paper, as we'll see in the next section.

\subsection{Blackwell's theorem}
Now that we've defined the special case of arithmetic point processes, we're ready to state the core theorem we're setting out to prove.
Loosely, it says that if you go ``well into the lifetime" of a non-arithmetic renewal process and count the number of events inside an arbitrary interval of size $u$, you will find on average $\frac{u}{t}$ events inside the interval, where $t$ is the mean time between events for the process. If the process is arithmetic, the same result holds when $u$ is a multiple of the span. 

To state it formally, recall the function $\mu(s)$, which is the average number of events from the process we can expect to see from the start of the process until time $s$ has elapsed. The average number of events in an interval of size $u$ which starts at time $m$ is then given by $\mu(m+u)-\mu(m)$. We can now state the result of Blackwell's renewal theorem. A proof of this is provided in section \ref{final_proof}.

\begin{theorem}[Blackwell's renewal theorem]\label{conject:events}
For a non-arithmetic renewal process with inter-arrival times given by the i.i.d. sequence $(T_i)$ with $\Bbb E(T_i)=t$, the average number of events lying in an interval started at a large time, $m$ and of size $u$ is given by:
$$\lim_{m \to \infty}(\mu(m+u)-\mu(m))=\frac{u}{t}$$
For an arithmetic renewal process, we get the same result when $u$ is a multiple of its span, $\lambda$.
\end{theorem}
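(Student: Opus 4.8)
The plan is to convert the statement into one about counting integers. Using the splitting $T_i=t+\epsilon_i$ of Section~\ref{terminology}, the $i$-th event lies at $S_i=\sum_{k\le i}T_k = it + W_i$ with $W_i:=\epsilon_1+\dots+\epsilon_i$ a mean-zero walk; I read $it$ as the deterministic ``skeleton'' of the process (a point at every multiple of $t$, i.e.\ at every integer once time is rescaled by $t$) and $W_i$ as a perturbation that the large-$m$ limit is meant to erase. The cheap ingredient is the arithmetic case, which one can see by hand. If the events sit on the lattice $\lambda,2\lambda,3\lambda,\dots$, then after rescaling by $\lambda$ the quantity $\mu(m+u)-\mu(m)$ is just the number of integers in a half-open real interval of length $u/\lambda$; that number is identically $u/\lambda$ when $u/\lambda\in\Bbb Z$ and otherwise flips forever between $\lfloor u/\lambda\rfloor$ and $\lceil u/\lambda\rceil$. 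Since $\lambda=t$ for the deterministic process this settles that case outright, and it already shows both why the answer must be $u/t$ and why the ``$u$ a multiple of the span'' proviso is unavoidable. For a general arithmetic renewal process, whose gaps are random multiples of $\lambda$, the same count works on $\lambda\Bbb Z_{>0}$ once one knows that the expected number of renewals landing exactly on $j\lambda$ tends to $\lambda/t$ as $j\to\infty$ --- the classical discrete renewal theorem, which I would cite or re-derive with a short generating-function argument --- and then summing over the exactly $u/\lambda$ lattice points inside $(m,m+u]$ gives $u/t$.

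For the non-arithmetic case I would bring in the randomization set up at the end of Section~\ref{terminology}: rather than fixing the far-out start time $m$, start the window at $V=\vartheta U$, uniform on the huge range $(0,\vartheta]$. Then
\[\Bbb E\big(\mu(V+u)-\mu(V)\big)=\tfrac{1}{\vartheta}\int_0^\vartheta\big(\mu(v+u)-\mu(v)\big)\,dv,\]
and telescoping this integral together with the elementary renewal theorem $\mu(s)/s\to 1/t$ shows that the averaged count tends to $u/t$ as $\vartheta\to\infty$. This is the advertised ``bijection to a deterministic process'', in the mean: a window dropped at a uniformly random place catches, on average, one event per mean-gap $t$ across its length $u$, exactly as it would against the skeleton $t\Bbb Z_{>0}$, because the centered $W_i$ add nothing to the expectation. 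What is left is to promote this average to the honest limit, i.e.\ to rule out that $m\mapsto\mu(m+u)-\mu(m)$ oscillates as $m\to\infty$.

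That last step is where I expect all the difficulty to sit, and it is also exactly where the non-arithmetic hypothesis has to be used: the arithmetic base case shows that when $u$ is not a multiple of the span the limit genuinely does not exist, so no correct proof can avoid exploiting non-arithmeticity here. The clean way to see what must be shown is that $\mu(m+u)-\mu(m)$ is governed by the current-life (age) $m-S_{N(m)}$ of the process at time $m$ --- for the skeleton this age is literally $m$ reduced modulo $t$, the most transparent ``mapping to the integers'' there is --- so the theorem amounts to the age converging in law as $m\to\infty$, the averaged statement above being the (always valid) fact that the age is asymptotically uniform on average. I would try to extract genuine convergence from the integer-mapping picture by a lattice approximation of the non-arithmetic law with a mesh that shrinks as $m$ grows, the role of non-arithmeticity being that the approximating lattices can be chosen not to resonate with $u$; the real work, and the place the argument could stall, is making this approximation uniform in $m$ --- equivalently, controlling the rate of convergence in the discrete renewal theorem as the mesh tends to zero --- rather than only on average.
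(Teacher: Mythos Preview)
Your averaged identity is correct, but as you yourself flag, it is only a repackaging of the elementary renewal theorem: the entire content of Blackwell's theorem over and above the elementary one is precisely the upgrade from Ces\`aro convergence to actual convergence that you leave open. So the proposal is not a proof. The shrinking-mesh lattice approximation you sketch would require uniform-in-$m$ control of the discrete renewal error as the mesh goes to zero, and obtaining that is not easier than proving Blackwell directly; you have correctly located the difficulty but not removed it.

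The paper closes this gap by a different mechanism, and its ``mapping to integers'' is more literal than your reading of it. Rather than writing $S_i=it+W_i$ and averaging $W_i$ away, the paper takes a sample path and physically slides each event (together with the observation window) so that every inter-arrival becomes exactly $t$; this is a bijection on realizations that preserves the count inside the window except for one possible loss at the left edge and one possible gain at the right, and those two boundary effects cancel in expectation because the residual life at $u_1$ and at $u_2$ have the same distribution. The hard step---your missing step---is then isolated as Proposition~\ref{prop:blackwell_unif}: for a large fixed start time the residual life, conditional on the length of the straddling inter-arrival, is uniform on that length. The paper proves this by the same sliding trick (reducing to a deterministic process observed from a random start $\vartheta-\sum_{k\le M}\eta_k$) followed by the CLT and a Weyl-type equidistribution lemma (Theorem~\ref{thm:sum_uniform}) showing that an i.i.d.\ sum reduced mod~$1$ converges to uniform exactly when the summand law is non-arithmetic. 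That characteristic-function criterion is where the non-arithmetic hypothesis is spent, and it replaces the rate-of-convergence control you were unable to supply.
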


This is the same conclusion we were expecting in section \ref{sectn:count} (with the buses), with the added condition of going `well into the lifetime' of the process avoiding preferential treatment of any of the renewals (like we also alluded to there) as we will see in the next section.

In general, the literature treats arithmetic processes as ``encumbrances", special cases to be treated separately \footnote{See for example section 11.1 of \cite{feller} ``the formulation of the renewal theorem is encumbered by the special role played by distributions concentrated on the multiples of a number $\lambda$"}. We'll take the opposite approach here, taking the simplest arithmetic process (the deterministic point process explained before) and using it as the basic template over which all other point processes are built. Note that for a deterministic point process, the theorem above holds if we start our observation window at at any arbitrary time and we don't necessarily have to go ``well into its lifetime". 

\section{Why go well into the lifetime}\label{why_well_lifetime}
In this section, we'll explore what going ``well into the lifetime" of a renewal process is doing exactly for the renewal theorem and why we need the special-casing for arithmetic renewal processes. The method we'll use to prove this will be similar to the method we'll use to prove Blackwell's theorem in section \ref{final_proof}.

\begin{proposition}\label{prop:blackwell_unif}
If we take a non-arithmetic renewal process and take a time stamp, $\vartheta$ ``well into its lifetime" ($\vartheta \to \infty$), the distribution of the time from this time stamp to the next event from the process, conditional on the length of the interval (intervals are formed by successive events of the process) our time stamp, $\vartheta$ lies in being $T_{M+1}=t_{M+1}$, is uniform between $0$ and $t_{M+1}$.
\end{proposition}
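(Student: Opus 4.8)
The plan is to make the phrase ``well into its lifetime'' precise by taking the observation instant to be $V=\vartheta U$, chosen uniformly at random from the large window $(0,\vartheta]$, and then letting $\vartheta\to\infty$. Randomizing the instant is exactly what keeps it from favoring any particular renewal of the process --- the issue flagged in Section~\ref{sectn:count} --- so the statement I will actually prove is this: writing $M=N(V)$, the conditional distribution of the residual time $S_{M+1}-V$, given that the interval containing $V$ has length $T_{M+1}=t_{M+1}$, converges as $\vartheta\to\infty$ to the uniform law on $(0,t_{M+1}]$; the same argument hands us the age $V-S_M$ for free.

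First I would freeze an entire realization of the renewal process, so that the epochs $0=S_0<S_1<S_2<\cdots$ are fixed numbers, and set $K=N(\vartheta)$. The complete intervals $(S_0,S_1],\dots,(S_{K-1},S_K]$ together with the leftover tail $(S_K,\vartheta]$ partition $(0,\vartheta]$, and since $V$ is uniform on $(0,\vartheta]$ it lands in $(S_{k-1},S_k]$ with probability $T_k/\vartheta$. The single fact doing the real work is elementary: a uniform random variable restricted to any subinterval is again uniform there. Hence, conditional on $V\in(S_{k-1},S_k]$, the residual $S_k-V$ is uniform on $(0,T_k]$ \emph{regardless of the value of $T_k$}. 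Grouping the complete intervals whose length is the prescribed $t_{M+1}$, the conditional law of the residual given ``$V$ fell in one of them'' is a mixture of copies of the uniform law on $(0,t_{M+1}]$, hence equals that uniform law. This holds for every frozen realization, so it survives averaging over realizations; the length-biasing --- long intervals are hit with probability proportional to their length --- changes only which length we are likely to see, not the within-interval shape once the length is pinned.

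That leaves the tail and the limit. The event $\{V\in(S_K,\vartheta]\}$ has probability $(\vartheta-S_K)/\vartheta=1-S_K/\vartheta$, which tends to $0$ almost surely because $S_K/\vartheta\to1$ (strong law for the renewal process), so by dominated convergence the tail contributes nothing as $\vartheta\to\infty$. The step I expect to be the real obstacle is handling the conditioning event $\{T_{M+1}=t_{M+1}\}$ --- of probability zero when the $T_i$ have a density --- together with the limit: the clean route is to condition instead on $T_{M+1}\in(t_{M+1}-\delta,t_{M+1}+\delta)$, get uniformity on $(0,t_{M+1}]$ up to an $O(\delta)$ error uniformly in $\vartheta$, send $\vartheta\to\infty$ to remove the tail term, and only then send $\delta\to0$; equivalently, one writes the joint density of (length of the interval containing $V$, position of $V$ inside it) and reads off that it factors as (length-biased length density) times (uniform density on that interval). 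Finally I would note that, with the instant randomized this way, the non-arithmetic hypothesis is barely used for the conditional statement --- the same uniform-restriction argument runs verbatim on the integer lattice of an arithmetic process --- and becomes essential only in Section~\ref{final_proof}, where the marginal picture, with the conditioning on interval length removed, must be insensitive to exactly where the large window sits; that sensitivity is what forces the ``$u$ a multiple of the span'' caveat in Theorem~\ref{conject:events}.
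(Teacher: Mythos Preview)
Your argument is clean and correct, but it proves a different statement than the one the paper intends. In Proposition~\ref{prop:blackwell_unif} the time stamp $\vartheta$ is a \emph{deterministic} number that is sent to infinity; the randomness producing the conditional uniformity must therefore come entirely from the renewal process itself. You have instead replaced $\vartheta$ by $V=\vartheta U$, an externally randomized instant, and then the uniformity is immediate from ``uniform restricted to a subinterval is uniform.'' That is exactly the content of the paper's later Proposition in Section~3.1 (the ``large uniform number'' strategy), which the paper explicitly presents as an \emph{alternative} to Proposition~\ref{prop:blackwell_unif}, not a proof of it.

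The tell is your closing remark that the non-arithmetic hypothesis is ``barely used.'' For the statement you proved, that is true; for Proposition~\ref{prop:blackwell_unif} as written, it is false. Take the deterministic process with events at the positive integers and put $\vartheta=100.3$: the containing interval has length $1$ and the residual time is exactly $0.7$, not uniform on $(0,1)$, no matter how large $\vartheta$ is. The paper's proof handles deterministic $\vartheta$ by mapping to a deterministic point process (so the observation start becomes $\vartheta-\sum_{k=1}^M\eta_k$), invoking the central limit theorem so that this rescaled start has variance tending to infinity, and then using the characteristic-function criterion of Theorem~\ref{thm:sum_uniform} to conclude that its fractional part is asymptotically uniform \emph{precisely when the process is non-arithmetic}. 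Your route sidesteps all of this machinery, which is elegant, but at the cost of not establishing the proposition for a fixed large observation time.
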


\begin{proof}
Consider some time $\vartheta$ in the time line of the process where we start our observation interval. Suppose the index of the point right before the time stamp $\vartheta$ is $M$ (in other words, this event is the $M$-th event in the process, or that $M = N(\vartheta)$). This makes the index of the event right after the time stamp $\vartheta$, $M+1$ (visualized in figure \ref{fig:pt_process_basic}).

\begin{figure}
  \includegraphics[width=0.8\linewidth]{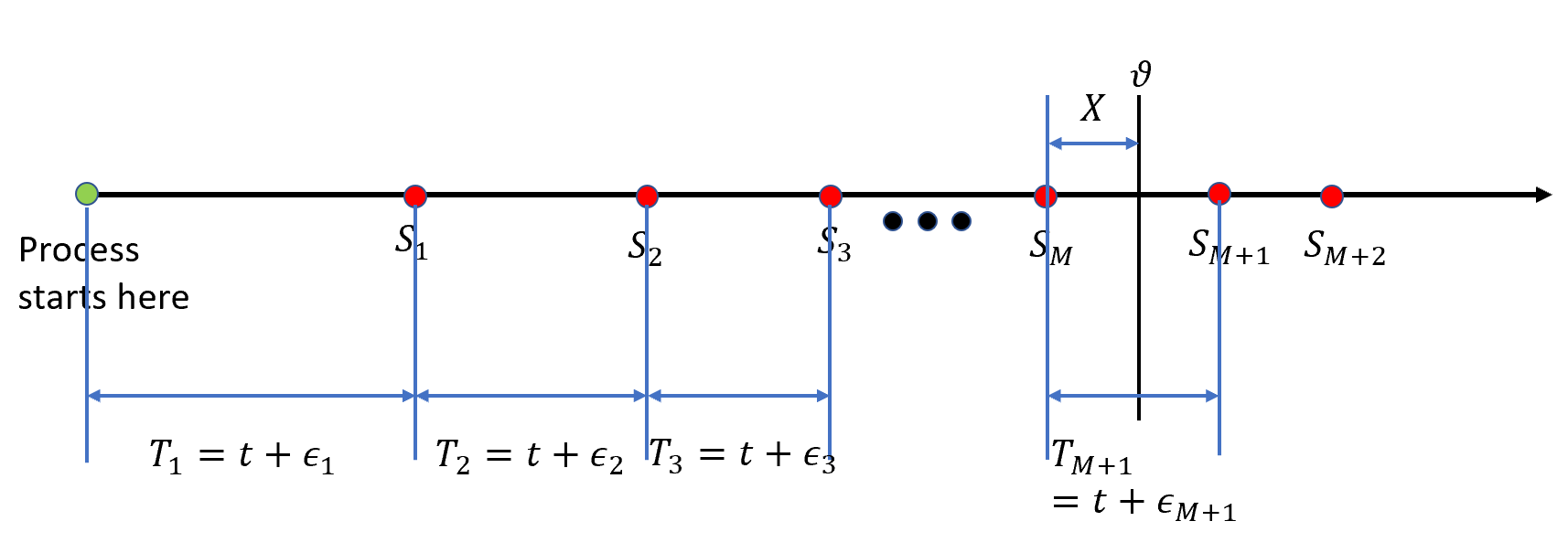}
  \caption{Time until next event when we're well into the lifetime of a point process.}
  \label{fig:pt_process_basic}
\end{figure}

\fbox{Part 1: Map to a deterministic point process in a conditional world}
Now, let's condition on the time between event at $S_M$ and the one at $S_{M+1}$ being $T_{M+1} = t_{M+1}$. Let's convert our process into a deterministic point process, with times between all events being $t_{M+1}$ (within this conditional world).

The time between the start of the process and event \#$1$ is $T_1$ (per assumptions of i.i.d. inter-arrivals for a renewal process, its distribution is the same even in the conditional world), which is not $t_{M+1}$. So, we move event \#$1$ in a way that it does become $t_{M+1}$. To do this, we move it back by amount: $\eta_1 = T_1-t_{M+1}$ (if $\eta_1$ is negative, we end up moving it forward). To preserve other properties of the process, we move not just the event but the entire time line. So, all subsequent events that happened after event \#$1$ also move by this same amount. Hence, event \#$M$ moves and will change the $X$ in figure \ref{fig:pt_process_basic}, the time between event \#$M$ and the start of our observation interval (at $\vartheta$). We want to avoid this, so we move the start of our observation window back by $\eta_1$ as well. Once we do this, we ensure that $X$ doesn't change. Now, the time between the start of our process and event \#$1$ has become $t_{j+1}$. We can repeat this for event \#$2$, \#$3$ and so on until event \#$J$. When we get to event \#$J+1$, there is no need to move it since the time between \#$J$ and \#$J+1$ is already $t_{j+1}$. And for events after \#$J+1$, we simply move them without moving the start of our observation interval (since they don't affect the random variable $X$ anymore).

Once we're done with this for all points, we end up with a deterministic point process in the conditional world, where the time between arrivals is a deterministic $t_{M+1}$. But the start of our observation interval is now at $\vartheta-\sum\limits_{k=1}^M \eta_k$. Hence for any realization of the original point process, there is a corresponding realization for a deterministic point process where we start the interval at time $\tau = \vartheta -\sum\limits_{k=1}^M \eta_k$.

Now, we have two point-processes. The original one (stochastic arrivals but observation window starts at a deterministic $\vartheta$) and the modified one which we ensured is deterministic (deterministic arrivals but observation window starts at a stochastic time, $\tau$). Observe the following things about the two processes:

\begin{itemize}
\item{Any realization of the original process maps bijectively to a realization of the deterministic process.}
\item{The probability of the realization of the original process is the same as the realization of the deterministic process (since it involves all the same random numbers).}
\item{The variable, $X$ we're interested in stays the same between the two processes.}
\end{itemize}

Hence, we can make inferences about the $X$ in the deterministic process in-lieu of the original one (within the conditional world where $T_{M+1} = t_{M+1}$).

\fbox{Part 2: Uniformity achieved when $\vartheta$ grows in deterministic process} Now, consider what happens for the deterministic point process as $\vartheta$ grows. This will cause $M$ to grow without bound as well (observe $M=N(\vartheta)$ and refer to lemma 4.3.2 of \cite{ocw_renewal}) and the variance of the term, $\sum\limits_{k=1}^M \eta_k$ will explode. Then, we take the conditional distribution of the random variable $\vartheta-\sum\limits_{k=1}^J \eta_k$ inside an interval of size $t_{M+1}$, which is just a realization from the random variable $T_i$, the inter-arrival times of the renewal process. Hence, this interval must be finite.

By the central limit theorem, $Z := \vartheta - \sum\limits_{k=1}^M \eta_k$ will tend towards a Gaussian distribution with some mean and a variance that grows as $\vartheta$ increases and hence $M$ becomes larger and larger (since the $\eta_k$ are i.i.d. per the assumption of the renewal process). Further, we can re-scale the axis of our deterministic point process in a way that the times between events becomes $1$ in our conditional world (divide all times by $t_{M+1}$). This re-scaling converts the $Z$ to another random variable, $Y$,

$$Y = \frac{\vartheta - \sum\limits_{k=1}^M \eta_k}{t_{M+1}}$$

 which is also Gaussian and whose variance also goes to $\infty$ as $\vartheta \to \infty$. This ensures that the arrivals in our re-scaled deterministic point process are placed at the positive integers. 

Now, all we need to show is that $Y-\lfloor Y \rfloor \sim U(0,1)$ as the variance, $\sigma_1^2$ of $Y$ goes to $\infty$ and we're done (the fact that we're still in the conditional world doesn't matter because the result doesn't depend on the variable we're conditioning on, $t_{M+1}$ so we can just integrate it out). We need to be careful though, since an obvious exception occurs when the $Y$ can only take integral values since $Y-\lfloor Y \rfloor$ is identically $0$ in that case. 

\fbox{Part 3: Proving uniformity}
The theorem that follows (\ref{thm:sum_uniform}) shows that only under severe restrictions on the support of the $T_i$ does the condition: $Y-\lfloor Y \rfloor \sim U(0,1)$ fail to hold \footnote{It is heavily inspired by \href{https://math.stackexchange.com/questions/4157329/if-x-is-gaussian-prove-that-x-lfloor-x-rfloor-sim-u0-1-as-its-variance}{this post}}.
\end{proof}

\begin{theorem}\label{thm:sum_uniform}
Let $(T_k)$ be a sequence of i.i.d. real valued random variables and define for each $m\in {\Bbb Z}$: $\gamma_m = {\Bbb E} \left( e^{2\pi i m T_1} \right)$. Set $S_n=T_1+\cdots T_n$. Then the following are equivalent:
\begin{enumerate}
\item{ The law of $S_n  \ {\rm mod}\ 1$ converge in distribution to  $\ { U}([0,1))$ }
\item{ $|\gamma_m|<1$ for every $m\in {\Bbb Z} \setminus \{0\}$.}
\item{For every $m\in {\Bbb Z} \setminus \{0\}$, $\theta\in [0,1)$: $\ {\rm support} (m X_1) \not\subset {\Bbb Z}+\theta$}
\end{enumerate}
\end{theorem}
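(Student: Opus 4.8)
The plan is to transfer the problem to the circle group $\Bbb R/\Bbb Z$ and recognise the numbers $\gamma_m$ as the Fourier coefficients of the distribution of $S_n \bmod 1$; once that identification is made, all three conditions become transparent. Concretely, the first step is to note that for the pushforward of the law of $S_n$ onto $\Bbb R/\Bbb Z$, the $m$-th Fourier coefficient is $\Bbb E\bigl(e^{2\pi i m S_n}\bigr)$, and since $S_n = T_1 + \cdots + T_n$ with the $T_k$ i.i.d., independence factorises this as $\prod_{k=1}^n \Bbb E\bigl(e^{2\pi i m T_k}\bigr) = \gamma_m^{\,n}$. Note also that $\gamma_0 = 1$ and $|\gamma_m| \le 1$ for every $m$.

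For the equivalence (1) $\Leftrightarrow$ (2), I would use that $\Bbb R/\Bbb Z$ is a compact abelian group, so trigonometric polynomials are dense in $C(\Bbb R/\Bbb Z)$ by Stone--Weierstrass; consequently a sequence of probability measures on the circle converges weakly if and only if each of its Fourier coefficients converges (this is the L\'evy continuity theorem in this setting, and compactness removes any tightness concern). The uniform law $U([0,1))$ has Fourier coefficients $1$ at $m=0$ and $0$ elsewhere, so (1) is equivalent to $\gamma_m^{\,n} \to 0$ for every $m \ne 0$; since $|\gamma_m| \le 1$, this holds exactly when $|\gamma_m| < 1$ for every $m \ne 0$, which is (2).

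For (2) $\Leftrightarrow$ (3), I would invoke the equality case of $|\Bbb E(Z)| \le 1$ for a random variable $Z$ with $|Z| = 1$ almost surely. Writing $\gamma_m = |\gamma_m|\, e^{2\pi i \alpha}$, one gets $|\gamma_m| = \Bbb E\bigl(\cos(2\pi(m T_1 - \alpha))\bigr)$ with the integrand everywhere $\le 1$, so $|\gamma_m| = 1$ forces $\cos(2\pi(m T_1 - \alpha)) = 1$ almost surely, i.e. $m T_1 \in \Bbb Z + \alpha$ almost surely; since $\Bbb Z + \alpha$ is closed this is the same as $\mathrm{support}(m T_1) \subset \Bbb Z + \alpha$. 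The converse is immediate: if $\mathrm{support}(m T_1) \subset \Bbb Z + \theta$ then $e^{2\pi i m T_1} = e^{2\pi i \theta}$ almost surely and $|\gamma_m| = 1$. Taking contrapositives, $|\gamma_m| < 1$ for all $m \ne 0$ says precisely that for every $m \ne 0$ and every $\theta \in [0,1)$ the inclusion $\mathrm{support}(m T_1) \subset \Bbb Z + \theta$ fails, which is (3) (the $X_1$ in item 3 being the variable $T_1$). Chaining this with the previous paragraph closes the loop of equivalences.

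The only point in the argument that is not routine manipulation of characteristic functions is the input used in the second step, namely that weak convergence of probability measures on the circle is detected by convergence of all Fourier coefficients; I expect this to be the main thing to pin down carefully. I would either cite the compact-group L\'evy continuity theorem outright or supply the short Stone--Weierstrass-plus-compactness argument, emphasising that compactness of $\Bbb R/\Bbb Z$ makes it strictly simpler than the classical statement on $\Bbb R$. The equality-case analysis in the third step (the integrand being pointwise $\le 1$) and the factorisation in the first step (independence) are both standard.
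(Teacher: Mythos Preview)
Your proposal is correct and follows essentially the same route as the paper: factorise the characteristic function via independence to get $\gamma_m^n$, use density of trigonometric polynomials in the $1$-periodic continuous functions to equate weak convergence to $U([0,1))$ with $\gamma_m^n\to 0$ for all $m\ne 0$, and then identify $|\gamma_m|=1$ with the support condition via the equality case. Your treatment of (2)$\Leftrightarrow$(3) spells out the cosine/equality argument more explicitly than the paper, which simply asserts that $\Bbb E\bigl(e^{2\pi i m T_1}\bigr)=e^{2\pi i\theta}$ iff $mT_1\in\Bbb Z+\theta$ almost surely, but the substance is identical.
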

\begin{proof}
By the i.i.d condition
$${\Bbb E} \left( e^{2\pi i m S_n} \right)
   = {\Bbb E} \left( e^{2\pi i m T_1} \right)^n = \gamma_m^n$$
Thus, if $g$ is a 1-periodic trigonometric polynomial, then
${\Bbb E}(g(S_n))\to \int_0^1 g(t)dt$ whenever $|\gamma_m|<1$ for every non-zero $m$. To see this, consider:

$$g(t) = \sum_m c_m e^{2 \pi i m t}$$

then,

$${\Bbb E}(g(S_n)) = \sum_m c_m \gamma_m^n  \xrightarrow[n \to \infty]{} c_0 \gamma_0  = c_0$$

Conversely if for some non-zero $m$, $\gamma_m=e^{i\phi}$ then the convergence does not take place for
$g=\exp(2\pi i m x)$. 

As trigonometric polynomials are dense in the 1-periodic functions we get that $1\Leftrightarrow 2$.
To see that 2 and 3 are equivalent, simply note that for non-zero $m$
 $${\Bbb E} \left( e^{2\pi i m T_1} \right) = e^{2 \pi i  \theta}$$
iff $mT_1 \in {\Bbb Z}+\theta$ almost surely.
\end{proof}

Note that the condition, $mT_1 \in {\Bbb Z}+\theta$ corresponds to shifted arithmetic point processes. For an arithmetic process we require:

$$T_1 \in \lambda I \;\; | I \in \Bbb Z$$

When the span, $\lambda$ is a rational number, we can express it as $\lambda = \frac{a}{m}I$ where $a$ and $m$ belong to $\Bbb Z \setminus \{0\}$. This means,

$$m T_1 = a I$$

and so $mT_1$ can only take integral values since $a$ and $I$ are both integers. Now, it we shift such a $T_1$ by some real number $s$, the integer part of $s$ won't change the support of $mT_1$, but the fractional part ($\theta$) will instead make the support: ${\Bbb Z}+\theta$.

We reached this conclusion when the span, $\lambda$ is a rational number, but since the rationals are a dense subset of the reals, we can make the claim for any real $\lambda$ to an arbitrary degree of precision. \footnote{Heavily inspired by \href{https://math.stackexchange.com/a/4159649/155881}{this page}.}

\section{Choosing where to start}
We showed in the previous section that the time from the start of a random window (well into its lifetime) to the next event in the process (conditional on the interval containing the window being a certain size) is uniform over that interval size for non-arithmetic renewal processes. Further, this crucial property (henceforth the ``uniformity property") isn't secured for arithmetic renewal processes with the same convenient strategy. And this is the source of the `encumbrance' Feller refers to.

So, we can consider picking a slightly less convenient strategy for deciding the start of our observation interval. But, we can ensure that such strategies secure the uniformity property for all point processes, not just non-arithmetic renewal processes. All the while, we'll know that this strategy can be replaced with the more general `going well into the lifetime' strategy with the same effect for non-arithmetic renewal processes. 

If we could choose our window start uniformly over the entire domain of our renewal processes, this would certainly ensure the uniformity  property. But the problem is that the domain of our processes is $[0,\infty)$ which is an infinitely large window. And, there is no such thing as a random number uniform over an infinite domain. This is bad news, but we can still approximate a random number like this as closely as we want. 

An obvious way to do this is to start our observation window at uniform random number with a larger domain, but there are many others. Another could be to use an exponential random variable with a vanishing rate. The first approach seems simple enough, so that's the one we'll go with.

Now, we show formally that this approach for choosing the starting window drawn from a large uniform distribution produces the same result as starting an observation window ``well into the lifetime" of the process, i.e. conditional on the inter-arrival interval that contains the start of our time window being $t$, the distribution of the time from the start of the window to the first event is uniform over $0$ to $t$. 

\subsection{A large uniform number}

\begin{proposition}
If we take a large uniform random number, $V = \lim_{\vartheta \to \infty} \vartheta U$, and start an observation interval at this time-stamp, the distribution of the time until the next event in the point process, conditional on the interval (time window between successive events in the process) that contains that time-stamp being of size $t_{M+1}$ is uniform over $(0,t_{M+1})$.
\end{proposition}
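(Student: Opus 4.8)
The plan is to trade the ``go well into the lifetime'' device of Proposition~\ref{prop:blackwell_unif} for the elementary observation that a uniform random variable, \emph{conditioned to lie in a sub-interval of its range, is uniform on that sub-interval}, and then to check that additionally conditioning on the \emph{length} of the (random) inter-event interval that $V$ falls into does not spoil this. Fix a large but finite $\vartheta$ and write $V=\vartheta U$, so $V$ is uniform on $(0,\vartheta]$. Let $M=N(V)$, so $V\in(S_M,S_{M+1}]$; put $A:=S_{M+1}-V$ for the time from $V$ to the next event and $L:=T_{M+1}=S_{M+1}-S_M$ for the length of the interval containing $V$ (this $L$ is the quantity the proposition calls $t_{M+1}$). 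Since $S_n\to\infty$ almost surely, $M$ is almost surely finite, and I want to show that the conditional law of $A$ given $L=\ell$ converges to $U(0,\ell)$ as $\vartheta\to\infty$.

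The computation I would run is as follows. Condition first on the whole realisation $(S_k)$ of the point process. Then $V$ is uniform on $(0,\vartheta]$ and independent of it, so $\Bbb P\bigl(V\in(S_k,S_{k+1}]\mid (S_k)\bigr)=T_{k+1}/\vartheta$ for every $k$ with $S_{k+1}\le\vartheta$, and on that event $A=S_{k+1}-V$ is conditionally uniform on $(0,T_{k+1})$. Exactly one interval is exceptional --- the one straddling $\vartheta$, namely $(S_{N(\vartheta)},S_{N(\vartheta)+1}]$, in which $V$ can only reach up to $\vartheta$ --- and I would quarantine its contribution as a ``defect'' term. Summing the remaining contributions over $k$, the joint law of $(A,L)$ is
\[
\Bbb P(A\in dx,\ L\in d\ell)\;=\;\frac{dx}{\vartheta}\,\mathbf{1}\{0\le x<\ell\}\,H_\vartheta(d\ell)\;+\;(\text{defect}),\qquad H_\vartheta(d\ell):=\Bbb E\Bigl[\#\{k\ge 0:\ S_{k+1}\le\vartheta,\ T_{k+1}\in d\ell\}\Bigr],
\]
that is, $H_\vartheta$ is the expected number of complete inter-event intervals inside $(0,\vartheta]$ with length in $d\ell$ (a finite measure of total mass $\mu(\vartheta)$). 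The decisive feature is that all $x$-dependence sits in the flat indicator $\mathbf{1}\{0\le x<\ell\}$; the process enters only through $H_\vartheta$, which does not see $x$. Integrating out $x$ gives the marginal $\Bbb P(L\in d\ell)=\frac{\ell}{\vartheta}H_\vartheta(d\ell)+(\text{defect})$ --- the length-biased law of the inspection paradox --- and upon dividing, the common factor $\frac{1}{\vartheta}H_\vartheta(d\ell)$ cancels, leaving the conditional density of $A$ given $L=\ell$ equal to $\mathbf{1}\{0\le x<\ell\}/\ell$, i.e. $U(0,\ell)$, \emph{exactly}, up to the defect. So no renewal-theoretic limit is needed for the main identity; the sole role of $\vartheta\to\infty$ is to kill the single partial interval at the right end of $(0,\vartheta]$.

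The remaining step is to control that defect. Its total mass is at most $\Bbb E[\vartheta-S_{N(\vartheta)}]/\vartheta$, the expected age at $\vartheta$ divided by $\vartheta$; since this age lies in $[0,\vartheta]$ and, for any renewal process with $\Bbb E(T_i)=t$ --- or more generally whenever the interval straddling $\vartheta$ has length $o(\vartheta)$ --- vanishes after normalising by $\vartheta$, bounded convergence forces this mass to $0$. As the defect contributes vanishing mass to both the joint law of $(A,L)$ and its $L$-marginal, while the surviving part of the $L$-marginal retains mass $\to 1$, the conditional law of $A$ given $L=\ell$ converges (in total variation) to $U(0,\ell)$, which is the assertion. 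I expect the only real friction to be bookkeeping: giving a precise meaning to ``conditional on $L=t_{M+1}$'' when $L$ is continuous (the statement is genuinely a disintegration claim, equivalently one about $\Bbb E[f(A)g(L)]$ for bounded test functions $f,g$), and verifying the age-normalisation hypothesis in whatever generality one wants beyond renewal processes. Conceptually nothing is deep: $V$ selects an interval with probability proportional to its length and then sits uniformly inside it, so fixing the length discards exactly the proportionality and keeps the uniformity. Finally, I would close by noting that this reproduces the ``uniformity property'' of Proposition~\ref{prop:blackwell_unif}, but now for an arbitrary point process rather than only a non-arithmetic renewal process --- which is precisely why the slightly less natural ``start at a large uniform time'' rule is worth adopting.
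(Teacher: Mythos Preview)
Your proposal is correct and follows essentially the same two-step structure as the paper's proof: (i) a uniform random variable conditioned to lie in a sub-interval of its support is uniform on that sub-interval, and (ii) the sole exceptional interval---the one straddling the right endpoint $\vartheta$---contributes a defect of probability at most $(\vartheta-S_{N(\vartheta)})/\vartheta\to 0$. Your write-up is more careful than the paper's on one point the paper leaves implicit: by introducing the measure $H_\vartheta(d\ell)$ and showing the $x$-dependence factors cleanly as $\mathbf{1}\{0\le x<\ell\}$, you make explicit that conditioning on the \emph{length} $L=\ell$ (rather than on the specific interval) still yields the uniform law, which is exactly the disintegration claim the proposition asserts.
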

\begin{proof}
If we take a uniform random number and consider the conditional distribution that it will be within a sub-interval completely contained within its support, its easy see (or prove with Bayes' theorem) that this conditional distribution is uniform over the sub-interval. 

Let $S_M$ be the time of the event right before time $\vartheta$. If our uniform random number between $0$ and $\vartheta$ lies anywhere from $0$ to $S_M$, the sub-intervals are completely contained within it. Hence, the result above concerning the sub-intervals of a uniform random number applies and the result holds.

However, if the uniform random number lies anywhere from $S_M$ to $\vartheta$ (red region in figure \ref{fig:unif_start}), the interval from $S_M$ to $S_{M+1}$ lies only partially inside the support of our uniform random number. In this case, the premise of our proposition is violated. However, the probability of this will be:

$$P(S_M < V < \vartheta) = \frac{\vartheta-S_M}{\vartheta} < \frac{t_{M+1}}{\vartheta}$$

Assuming samples from the inter-arrival distribution, $T_i$ are always finite, this probability tends to zero as $\vartheta \to \infty$. This means that as $\vartheta \to \infty$, our uniform random number $V$ will lie within the interval $(0,S_M)$ almost surely. Which is the region where the result we desire holds.

\begin{figure}
  \includegraphics[width=0.8\linewidth]{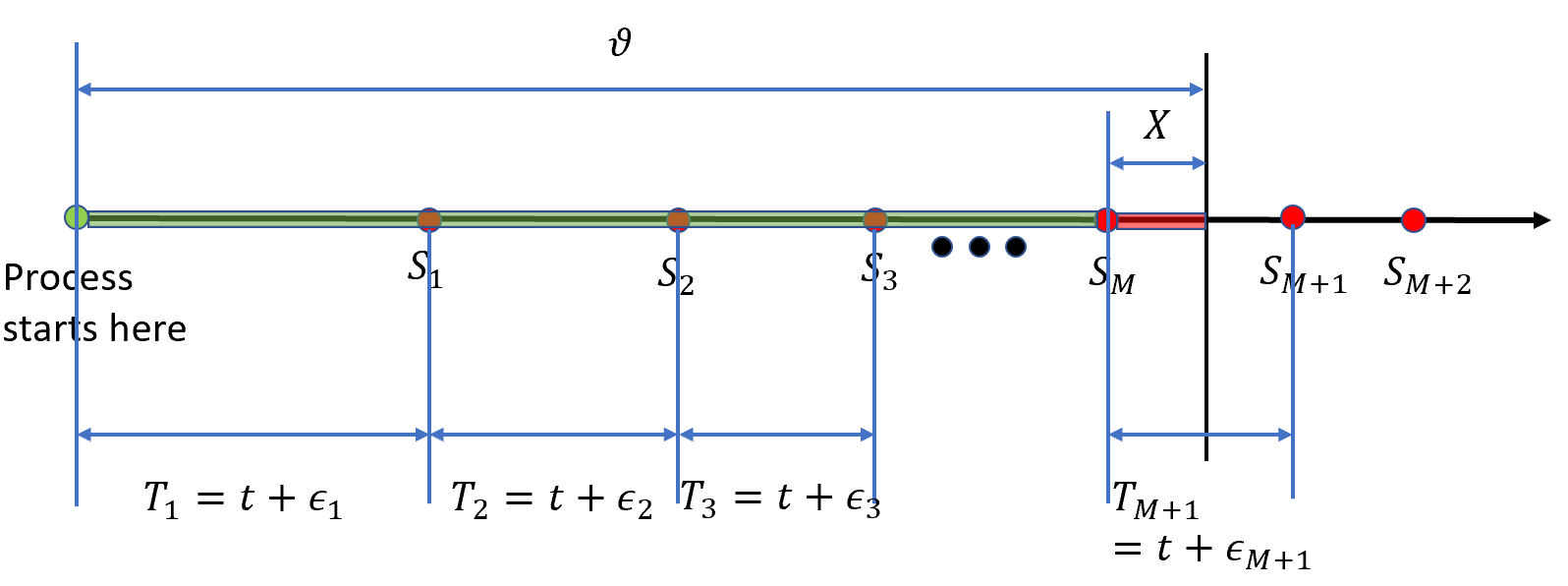}
  \caption{The time until next event in the point process.}
  \label{fig:unif_start}
\end{figure}

\end{proof}

\subsection{Large uniform deferred}
\begin{proposition}\label{prop_unif_deferred}
If we take a large uniform random number, $V = \lim_{\vartheta \to \infty} \vartheta U$, and start an observation interval at time-stamp, $V+c$ where $c$ is a constant, the distribution of the time until the next event in the point process, conditional on the interval (time window between successive events in the process) that contains that time-stamp being of size $t_{M+1}$ is uniform over $(0,t_{M+1})$.

\end{proposition}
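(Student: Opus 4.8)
The plan is to deduce this from the previous proposition (the one treating a plain large uniform start $V$), since deferring the start by a fixed constant $c$ only perturbs the law of the start point on a vanishingly small set. Concretely, if $V$ is uniform on $(0,\vartheta]$ then $V+c$ is uniform on $(c,\,c+\vartheta]$, and a fresh large uniform draw $V' = (c+\vartheta)U'$ is uniform on $(0,\,c+\vartheta]$. An elementary computation shows that the total variation distance between the law of $V+c$ and the law of $V'$ is $O(|c|/\vartheta)$, which vanishes as $\vartheta\to\infty$ (the discrepancy comes entirely from the boundary strip of length $|c|$ together with the slight mismatch in densities, $1/\vartheta$ versus $1/(c+\vartheta)$). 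So to leading order in $\vartheta$, starting the window at $V+c$ is the same as starting it at $V'$.

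First I would make this precise by a coupling: realize $V'$ on the same probability space as $V$ so that $V' = V+c$ on an event of probability $1 - O(|c|/\vartheta) \to 1$. On that event the observation window, the forward recurrence time $X$ to the next event, and the conditioning event $\{T_{M+1}=t_{M+1}\}$ are literally identical under the two start rules. Next I would apply the previous proposition with $c+\vartheta$ in the role of the large parameter: it yields that, conditional on $T_{M+1}=t_{M+1}$, the law of $X$ started at $V'$ converges to the uniform law on $(0,t_{M+1})$ as $\vartheta\to\infty$. Since the two start rules coincide off an event of probability tending to $0$, the conditional law of $X$ under the $V+c$ rule has the same limit, and then (as in the earlier proofs) $t_{M+1}$ can be integrated out because the limiting uniform shape does not depend on which of the three vanishing events we discarded.

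The point that needs care is the interplay between the shift and the conditioning. Just as in the previous proposition, the premise that the window start lies in a fully-covered inter-arrival interval can fail when the start lands in the last, partially-covered interval near $c+\vartheta$; but that event has probability at most $t_{M+1}/(c+\vartheta)\to 0$ under the standing assumption that inter-arrival samples are a.s. finite. When $c<0$ there is the extra possibility $V+c\le 0$, which has probability $1 - (\vartheta - |c|)/\vartheta = |c|/\vartheta \to 0$ and is likewise discarded. I expect the only (mild) obstacle to be the bookkeeping needed to check that these three vanishing contributions — the shift boundary of size $|c|$, the partially-covered terminal interval, and (for $c<0$) the event $V+c\le 0$ — can all be removed simultaneously and uniformly enough in $t_{M+1}$ that the conclusion survives integrating out the conditioning variable.
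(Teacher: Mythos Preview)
Your argument is correct. The paper itself does not give a detailed proof of this proposition: it simply remarks that ``it is not hard to extend the proof from the previous sub-section'' to the shifted start $V+c$. The most direct extension the paper has in mind is just to observe that $V+c$ is itself uniform on $(c,\,c+\vartheta]$, so the same sub-interval argument from the previous proposition applies verbatim (a uniform variable, conditioned to lie in a sub-interval fully contained in its support, is uniform on that sub-interval), with the two boundary strips near $c$ and near $c+\vartheta$ each having vanishing probability as $\vartheta\to\infty$. Your coupling route---comparing $V+c$ to a fresh large uniform $V'$ on $(0,\,c+\vartheta]$ via a total-variation bound of order $|c|/\vartheta$---is a valid alternative that reaches the same conclusion, but it introduces an extra layer (the coupling with $V'$) that the direct argument avoids. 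Your concern about uniformity of the error terms in $t_{M+1}$ is also more than the proposition requires: the statement is conditional on a \emph{fixed} $t_{M+1}$, so pointwise vanishing of the boundary probabilities suffices here, and integrating out is not needed until later in the paper.
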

It is not hard to extend the proof from the previous sub-section and show that even if we add a constant term to the end of the uniform interval, starting it at $V+c$ instead of at $V$, the result around uniformity still holds (which is what the proposition above is saying).

\section{Deterministic point process}
\label{deterministic}

\subsection{Conditional uniform is a way}
In this section, we'll prove Blackwell's renewal theorem when following the `large uniform' strategy for picking the start of the observation interval for the deterministic point process. To do this, we'll need some results which we show first. 

\begin{lemma}\label{prop:unif_minus_1}
If $U$ is a uniform random number over $[0,1)$ and $c \in \Bbb R$ is a deterministic scalar then we have:
$$\Bbb E(\lfloor c-U \rfloor) = c-1$$
\end{lemma}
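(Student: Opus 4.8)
The plan is to reduce the statement to the fractional part of $c$ and then to a two-valued random variable. Write $c = n + f$ with $n = \lfloor c \rfloor \in \Bbb Z$ and $f = c - n \in [0,1)$. Since shifting by an integer commutes with the floor function, $\lfloor c - U \rfloor = n + \lfloor f - U \rfloor$, so by linearity of expectation it suffices to prove $\Bbb E(\lfloor f - U \rfloor) = f - 1$.

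Next I would compute $\lfloor f - U \rfloor$ directly. As $U$ ranges over $[0,1)$, the quantity $f - U$ ranges over $(f-1, f\,]$; when $U \in [0, f]$ we have $f - U \in [0,f] \subseteq [0,1)$ so $\lfloor f - U \rfloor = 0$, and when $U \in (f, 1)$ we have $f - U \in (f-1, 0) \subseteq [-1, 0)$ so $\lfloor f - U \rfloor = -1$. Hence $\lfloor f - U \rfloor$ equals $-1$ with probability $1 - f$ and $0$ with probability $f$ (the single point $U = f$ has measure zero), giving $\Bbb E(\lfloor f - U \rfloor) = -(1-f) = f - 1$, and therefore $\Bbb E(\lfloor c - U \rfloor) = n + f - 1 = c - 1$. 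A cleaner alternative, worth recording, is to note that $\{c - U\}$ is again uniform on $[0,1)$ — negation and translation by the constant $c$ both preserve the uniform law on the circle $\Bbb R / \Bbb Z$ — so from $\lfloor c - U \rfloor = (c - U) - \{c - U\}$ and linearity, $\Bbb E(\lfloor c - U \rfloor) = \Bbb E(c - U) - \Bbb E(\{c - U\}) = (c - \tfrac12) - \tfrac12 = c - 1$.

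There is essentially no serious obstacle here; the only point requiring care is the boundary case $U = f$ (equivalently, the case where $c$ is an integer), but this affects only a probability-zero event and hence does not alter the expectation. I would add a one-line remark confirming that the formula $\lfloor c - U\rfloor = c - 1$ a.s. continues to hold there.
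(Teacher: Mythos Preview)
Your main argument is essentially the same as the paper's: both write $c$ between consecutive integers $n$ and $n+1$, observe that $\lfloor c-U\rfloor$ takes only the two values $n$ and $n-1$ according to whether $U\le c-n$ or $U>c-n$, and compute the expectation directly. Your second route via $\{c-U\}\sim U[0,1)$ and $\lfloor c-U\rfloor=(c-U)-\{c-U\}$ is a genuinely slicker alternative not in the paper, and your explicit treatment of the integer-$c$ boundary case is more careful than the paper's, which tacitly assumes $n<c<n+1$.
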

\begin{proof}

The expected value is
$$\mathbb{E}(\lfloor c-U \rfloor)=\int_0^1 \lfloor c-u\rfloor du$$
 Let us assume that $n<c<n+1$. We have that:
 
  $$\lfloor c-u \rfloor =
\begin{cases}
    n, & \text{if } c-u\geq n\\
    n-1,              & c-u<n
\end{cases}  
  $$

  
  Therefore,
  
$$\mathbb{E}( \lfloor c-U \rfloor) =nP(u\in[0,c-n])+(n-1)P(u \in (c-n,1])$$
$$=n(c-n)+(n-1)(1-c+n)$$
$$=nc-n^2+n-nc+n^2-1+c-n=c-1$$ \footnote{Note: Heavily inspired by the post \href{https://math.stackexchange.com/questions/4123742/prove-that-ec-u-c-1}{here}}.
\end{proof}

\begin{lemma}[Adding zero mean noise]\label{thm:add_white_noise}
If $\eta$ is a random variable with mean $0$ and that's all we know about it:

$$\Bbb E( \lfloor c+\eta-U \rfloor) = c-1$$

\end{lemma}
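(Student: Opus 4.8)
The plan is to reduce this to Lemma~\ref{prop:unif_minus_1} by conditioning on $\eta$. I read the phrase ``that's all we know about it'' as meaning $\eta$ is an arbitrary mean-zero random variable that is independent of the fresh uniform draw $U$. First I would fix a value $\eta = e$. In this conditional world $c + e$ is a deterministic real scalar, so Lemma~\ref{prop:unif_minus_1} applies verbatim with $c$ replaced by $c+e$, yielding
$$\mathbb{E}\big(\lfloor c + \eta - U \rfloor \,\big|\, \eta = e\big) = (c + e) - 1 .$$
The one thing to verify is that Lemma~\ref{prop:unif_minus_1} was really established for \emph{every} real argument and not just non-integers; inspecting its proof, the exceptional point $u$ with $c-u \in \mathbb{Z}$ has Lebesgue measure zero, so the identity holds for all $c \in \mathbb{R}$, hence for all $c+e$.

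Next I would integrate out $\eta$ via the law of total expectation:
$$\mathbb{E}\big(\lfloor c + \eta - U \rfloor\big) = \mathbb{E}_\eta\big[(c + \eta) - 1\big] = c - 1 + \mathbb{E}(\eta) = c - 1 ,$$
the final equality being exactly the hypothesis $\mathbb{E}(\eta) = 0$. To justify exchanging expectations one only needs $\lfloor c + \eta - U \rfloor$ to be integrable, which is immediate since it differs from $c + \eta - U$ by at most $1$ in absolute value and $\eta$ has a finite mean.

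There is essentially no hard step here: all the real content sits in Lemma~\ref{prop:unif_minus_1}, and the present statement is just its ``randomized $c$'' version, with the randomization washed out by linearity of expectation and $\mathbb{E}(\eta)=0$. The only point worth flagging is the independence of $\eta$ and $U$: without it, conditioning on $\eta = e$ would change the conditional law of $U$ and the first display would no longer follow from Lemma~\ref{prop:unif_minus_1}. In the applications of this lemma the noise $\eta$ and the window offset $U$ come from disjoint sources of randomness, so independence is the intended reading and the argument goes through cleanly.
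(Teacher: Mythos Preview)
Your proposal is correct and follows essentially the same approach as the paper: condition on $\eta$, apply Lemma~\ref{prop:unif_minus_1} with $c$ replaced by $c+\eta$, then average out $\eta$ using the law of total expectation and $\mathbb{E}(\eta)=0$. Your additional remarks on the integer edge case, integrability, and the implicit independence of $\eta$ and $U$ only add rigor that the paper's version leaves unstated.
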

\begin{proof}
First, we condition on $\eta$ to get (using proposition \ref{prop:unif_minus_1}):

$$E([c+\eta-U]|\eta) = c+\eta-1$$

Now, take expectation again

$$E([c+\eta-U]) = E(E([c+\eta-U]|\eta)) = E(c+\eta-1) = c-1$$
\end{proof}

\begin{proposition}[Renewal theorem for a deterministic point process (perturbed)]\label{prop:deterministic_perturb}
For a deterministic point process, which starts at time $0$ and events happen every $t$ interval (where $t$ is a non-stochastic scalar), if we start an observation interval at $V+\eta_1$, where $V = \lim_{m \to \infty} m U$ is a large uniform random number and $\eta_1$ is random variable with mean zero, and make the size of our observation interval $u+\eta_2$ where $u$ is a scalar and $\eta_2$ is another random variable with mean zero, the expected number of events falling into our interval will be $\frac{u}{t}$.
\end{proposition}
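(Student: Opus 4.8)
\emph{Proof strategy.} The plan is to write the event count as an explicit difference of two floor functions, rescale time so the process sits on the integers, peel off the integer part of the large uniform start time, and then read off the expectation term by term from Lemmas~\ref{prop:unif_minus_1} and~\ref{thm:add_white_noise}. Throughout I will use that $V$ is drawn independently of the perturbations $\eta_1,\eta_2$ (this is implicit in the statement and is exactly what makes Lemma~\ref{thm:add_white_noise} applicable).

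\textbf{Step 1 (set up the count).} Dividing every time by $t$, the events of the process sit at the positive integers and the observation window becomes $\bigl(W+\tfrac{\eta_1}{t},\,W+\tfrac{\eta_1}{t}+\tfrac{u+\eta_2}{t}\bigr]$, where $W=V/t$ is again a ``large uniform'' in the sense of the paper (if $V$ is uniform on $(0,m]$ then $W$ is uniform on $(0,m/t]$ with $m/t\to\infty$). Hence the number of events in the window is
$$N=\left\lfloor W+\frac{u+\eta_1+\eta_2}{t}\right\rfloor-\left\lfloor W+\frac{\eta_1}{t}\right\rfloor .$$
Which endpoints one includes is irrelevant for $\Bbb E(N)$: $W$ is continuously distributed and independent of $\eta_1,\eta_2$, so the two endpoints almost surely avoid the integers and any other boundary convention changes $N$ only on a null set.

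\textbf{Step 2 (strip the integer part of $W$).} In the $m\to\infty$ limit the fractional part $U^\ast:=W-\lfloor W\rfloor$ is uniform on $[0,1)$ and independent of $(\eta_1,\eta_2)$; since $\lfloor W\rfloor\in\Bbb Z$ it adds to both floors and cancels in the difference, giving
$$N=\left\lfloor U^\ast+\frac{u}{t}+\frac{\eta_1+\eta_2}{t}\right\rfloor-\left\lfloor U^\ast+\frac{\eta_1}{t}\right\rfloor .$$
Writing $U^\ast=1-U$ with $U$ uniform puts each floor in the form $\lfloor c+\eta-U\rfloor$ of Lemma~\ref{thm:add_white_noise}. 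Taking expectations and using that $\eta_1+\eta_2$ and $\eta_1$ are both mean zero, the first floor contributes $\bigl(\tfrac{u}{t}+1\bigr)-1=\tfrac{u}{t}$ and the second contributes $1-1=0$; although $\eta_1$ sits inside both floors, linearity of expectation lets us treat them separately, so $\Bbb E(N)=\tfrac{u}{t}$.

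\textbf{Main obstacle.} The delicate point is Step~2: making precise that the fractional part of the large uniform converges to $U[0,1)$ and is asymptotically independent of the perturbations, and that the floor identities survive the $m\to\infty$ limit. (The degenerate case flagged in Theorem~\ref{thm:sum_uniform} --- a variable supported on a coset of $\Bbb Z$ --- does not arise here because $U^\ast$ genuinely spreads over the whole of $[0,1)$.) Everything after that is bookkeeping with the two floor lemmas already established.
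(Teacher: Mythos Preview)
Your argument is correct, but it uses a genuinely different decomposition from the paper's. The paper conditions on $(\eta_1,\eta_2)$, invokes Proposition~\ref{prop_unif_deferred} to conclude that the residual time $X$ from the window start to the next event is uniform on $(0,t)$, and then writes the count as a \emph{single} floor, $N=1+\lfloor u'/t-X/t\rfloor$ with $u'=u+\eta_2-\eta_1$, so that one application of Lemma~\ref{prop:unif_minus_1} gives $\Bbb E(N\mid\eta_1,\eta_2)=u'/t$ and averaging finishes. You instead write $N$ symmetrically as the \emph{difference} of two floors at the two endpoints, peel off $\lfloor W\rfloor$, and hit each floor with Lemma~\ref{thm:add_white_noise}. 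What this buys you: you never need to cite Proposition~\ref{prop_unif_deferred} (you go straight to the fractional-part fact it rests on, proved in the appendix), and you actually put Lemma~\ref{thm:add_white_noise} to work, whereas the paper's proof of this proposition only ever calls Lemma~\ref{prop:unif_minus_1} after conditioning. What the paper's route buys: the ``$1+\lfloor\cdot\rfloor$'' form makes the residual-time interpretation visible and needs only one floor evaluation rather than two. Your remark that linearity of expectation handles the shared $U$ and shared $\eta_1$ across the two floors is exactly the point that makes the two-floor route legitimate; the boundary case $c=1$ in the second floor is harmless since Lemma~\ref{prop:unif_minus_1} extends to integer $c$ almost surely.
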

\begin{proof}

Note that the interval size now becomes $u+\eta_2-\eta_1$. Let's first condition on $\eta_1$ and $\eta_2$. Everything that comes next is within this conditional world.
Let $X$ be the time from the start of the interval to the very first event. By proposition \ref{prop_unif_deferred}, $X$ is a uniform random variable between $0$ and $t$. As before, we count the first event and then the remaining events in what's left of the interval. Define $u' = u+\eta_2-\eta_1$

$$N = 1 + \left \lfloor \frac{u'-X}{t} \right \rfloor =1+\left\lfloor \frac{u'}{t}-\frac{X}{t} \right\rfloor = 1+\left\lfloor \frac{u'}{t}-U \right\rfloor $$

Where $U$ is a uniform random variable between $0$ and $1$. Taking expectation and expressing the conditioning that got us this result, we get (from lemma \ref{prop:unif_minus_1}):

$$E(N | \eta_1, \eta_2) = \frac{u'}{t} = \frac{u+\eta_2-\eta_1}{t}$$

By the law of total expectation,
$$E(N) = E(E(N | \eta_1, \eta_2)) = \frac{u+E(\eta_2-\eta_1)}{t} = \frac{u}{t}$$

\end{proof}

\begin{corollary}[Renewal theorem for a deterministic point process]\label{prop:conjecture_deterministic}
For a deterministic point process, which starts at time $0$ and events happen every $t$ interval (where $t$ is a non-stochastic scalar), if we start an observation interval at a large uniform random number, $V$ (with size of the interval being $u$), the expected number of events falling into our interval will be $\frac{u}{t}$.
\end{corollary}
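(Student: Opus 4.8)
The plan is to recognize Corollary~\ref{prop:conjecture_deterministic} as the degenerate special case of Proposition~\ref{prop:deterministic_perturb} in which both perturbations vanish. Concretely, I would set $\eta_1 \equiv 0$ and $\eta_2 \equiv 0$, observe that each is (trivially) a random variable with mean zero, and then invoke Proposition~\ref{prop:deterministic_perturb} verbatim: starting the observation window at $V + \eta_1 = V$ with size $u + \eta_2 = u$ yields an expected event count of $\tfrac{u}{t}$. That is essentially the whole argument; the corollary is stated separately only because it is the clean form one actually wants to carry forward into Section~\ref{final_proof}.

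For completeness I would also spell out the short self-contained version, so the reader need not unwind the perturbed proposition. Let $X$ be the time from $V$ to the first event of the deterministic process. By the ``large uniform'' result (the $c=0$ instance of Proposition~\ref{prop_unif_deferred}), conditional on the inter-arrival interval containing $V$ — which for a deterministic process is always of size $t$ — the variable $X$ is uniform on $(0,t)$. Counting the first event and then the evenly spaced events in the remaining length $u - X$ gives
$$N \;=\; 1 + \left\lfloor \frac{u - X}{t} \right\rfloor \;=\; 1 + \left\lfloor \frac{u}{t} - U \right\rfloor,$$
where $U = X/t$ is uniform on $[0,1)$. Taking expectations and applying Lemma~\ref{prop:unif_minus_1} with $c = u/t$ gives $\Bbb E(N) = u/t$.

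The main point of care here is bookkeeping rather than mathematics: one must confirm that the hypotheses of the borrowed results genuinely apply — that $V = \lim_{\vartheta\to\infty}\vartheta U$ still qualifies as the large uniform start time used in Proposition~\ref{prop_unif_deferred}, that a degenerate (constant) random variable is admissible wherever the earlier statements say ``mean-zero random variable'', and that the floor identity of Lemma~\ref{prop:unif_minus_1} is applied with the correct constant $c = u/t$. No genuine obstacle remains once Propositions~\ref{prop_unif_deferred} and~\ref{prop:deterministic_perturb} and Lemma~\ref{prop:unif_minus_1} are in hand.
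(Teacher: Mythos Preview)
Your proposal is correct and matches the paper's approach exactly: the paper's proof is the single line ``A trivial consequence of proposition~\ref{prop:deterministic_perturb}, setting $\eta_1 = \eta_2 = 0$.'' Your additional self-contained derivation simply unwinds that proposition in the degenerate case and is a faithful (and slightly more explicit) rendering of the same argument.
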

\begin{proof}

A trivial consequence of proposition \ref{prop:deterministic_perturb}, setting $\eta_1 = \eta_2 = 0$.

\end{proof}

\subsection{Conditional uniform is the only way}

\begin{proposition}
If $U$ is a r.v. with support over $(0,1)$ that satisfies:
$$\Bbb E(\lfloor c-U\rfloor)=c-1$$
 for all $c$ then it must be uniform.
\end{proposition}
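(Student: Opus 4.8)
The plan is to convert the moment identity into a direct statement about the cumulative distribution function $F(c)=\mathbb{P}(U\le c)$ of $U$. Since $U$ is supported on $(0,1)$, it suffices to determine $F$ on $(0,1)$, and I claim the hypothesis — even when used only for $c\in(0,1)$ — pins it down completely, forcing $F(c)=c$.

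First I would fix $c\in(0,1)$ and evaluate $\lfloor c-u\rfloor$ pointwise for $u$ in the support $(0,1)$. Because $c-u$ then lies in the open interval $(c-1,c)\subset(-1,1)$, the floor takes only two possible values: $\lfloor c-u\rfloor=0$ when $u\le c$, and $\lfloor c-u\rfloor=-1$ when $u>c$. Integrating against the law of $U$ gives $\mathbb{E}(\lfloor c-U\rfloor)=0\cdot\mathbb{P}(U\le c)+(-1)\cdot\mathbb{P}(U>c)=F(c)-1$. Setting this equal to the assumed value $c-1$ yields $F(c)=c$ for every $c\in(0,1)$, which is exactly the statement that $U$ is uniform on $(0,1)$; in particular $F$ is continuous, so $U$ carries no atoms.

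I expect the only real care needed is the boundary book-keeping: verifying that, as $u$ ranges over the open support $(0,1)$ with $c\in(0,1)$, the value $c-u$ never reaches $0$ or $-1$, so the two-case description of the floor is genuinely exhaustive and the endpoints $\{0,1\}$ contribute nothing. It is also worth remarking that for $c\notin(0,1)$ the hypothesis adds no information: writing $c=n+c_0$ with $n\in\mathbb{Z}$ and $c_0\in[0,1)$, the integer-shift behaviour of the floor reduces $\mathbb{E}(\lfloor c-U\rfloor)=c-1$ to the same identity at $c_0$, consistently with Lemma~\ref{prop:unif_minus_1}. Thus no generality is lost by only invoking $c\in(0,1)$, and together with Lemma~\ref{prop:unif_minus_1} this shows the characterization is sharp: the identity $\mathbb{E}(\lfloor c-U\rfloor)=c-1$ holds for all $c$ if and only if $U\sim U(0,1)$.
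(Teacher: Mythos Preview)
Your proof is correct and follows essentially the same route as the paper: restrict to $c\in(0,1)$, observe that $\lfloor c-U\rfloor$ takes only the values $0$ and $-1$ according as $U\le c$ or $U>c$, and read off $F(c)=c$. The paper phrases the first step for general $n<c<n+1$ before specializing to $n=0$, whereas you go straight to $c\in(0,1)$ and then remark that the integer-shift of the floor makes other $c$ redundant; the core computation is identical.
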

\begin{proof}  If $n<c<n+1$,
$$c-1=\mathbb{E}(\lfloor c-U \rfloor)=nP(0<U\leq c-n)+(n-1)P(c-n<U \leq 1])$$
In particular, for any $0<c<1$ ($n=0$)
$$c-1=\mathbb{E}( \lfloor c-U \rfloor)=-P(c<U \leq 1)$$
From this, it follows that 
$$c=1-P(c<U\leq 1)=P(0< U\leq c) $$

Showing $U$ is uniform \footnote{Note: this proof is inspired by the post \href{https://math.stackexchange.com/a/4152660/155881}{here}}.
\end{proof}

\section{Extending to general point processes: proof of renewal theorem}\label{final_proof}
This whole section is the proof to Blackwell's renewal theorem we promised.

\subsection{Defining the variables}

We showed in corollary \ref{prop:conjecture_deterministic} that theorem \ref{conject:events} holds for a deterministic point process (under a slightly stricter condition on where we start our observation window). We now want to generalize that result. It is clear that we can go from a deterministic point process to a general point process by adding random noise to the inter-arrival times. And it seems intuitive that if this noise we've added has zero mean, it shouldn't change the conclusion (since it should sometimes increase the number of events falling into our interval and sometimes decrease them with the two effects canceling out in expectation). This is the approach we'll follow in this section, except we'll convert our general point process into a deterministic one, adding on assumptions to the $T_i$ as required along the way.

Recall in the deterministic point process, we had $\epsilon_i=0$ and events simply happen every $\Bbb E (T_i) = t$ interval. We showed that the conjecture holds for this point process in corollary \ref{prop:conjecture_deterministic}. We will now try to transform the general point process into the deterministic one.

We go from the start of it to some time $V$ (a large uniform random number) and take an interval of size $u$ starting there. The end of the observation interval is labeled $u_2$ and the start of it is labeled $u_1$.  For now, we simply have $V=u_1$, but this will change in some other scenarios. Refer again to the terminology defined in section \ref{terminology}. Figure \ref{fig:pt_process_observed} is identical to figure \ref{fig:pt_process_v0}, apart from two new random variables, $X$ and  $Y$ that we'll need. $X$ is the time elapsed between the start of the observation period, $u_1$ and the event immediately preceding it, while $Y$ is the same thing for the end of the observation interval, $u_2$. Note that requirements we have so far on the $T_i$'s is that:

\begin{itemize}
\item{They have the same mean, $t$\footnote{This requirement can be relaxed by noting that even if the $T_i$'s can have different means from a list of potential candidates, we have no way of knowing in advance which of those candidates will come into play.}. }
\item{We required them to be independent in proposition \ref{prop:blackwell_unif}.}
\end{itemize}

So far, we haven't required them to be identically distributed (apart from having the same mean). Sadly, we'll need to use these conditions as the proof progresses, but will provide an outline of a means to potentially get rid of it.

\begin{figure}
  \includegraphics[width=0.8\linewidth]{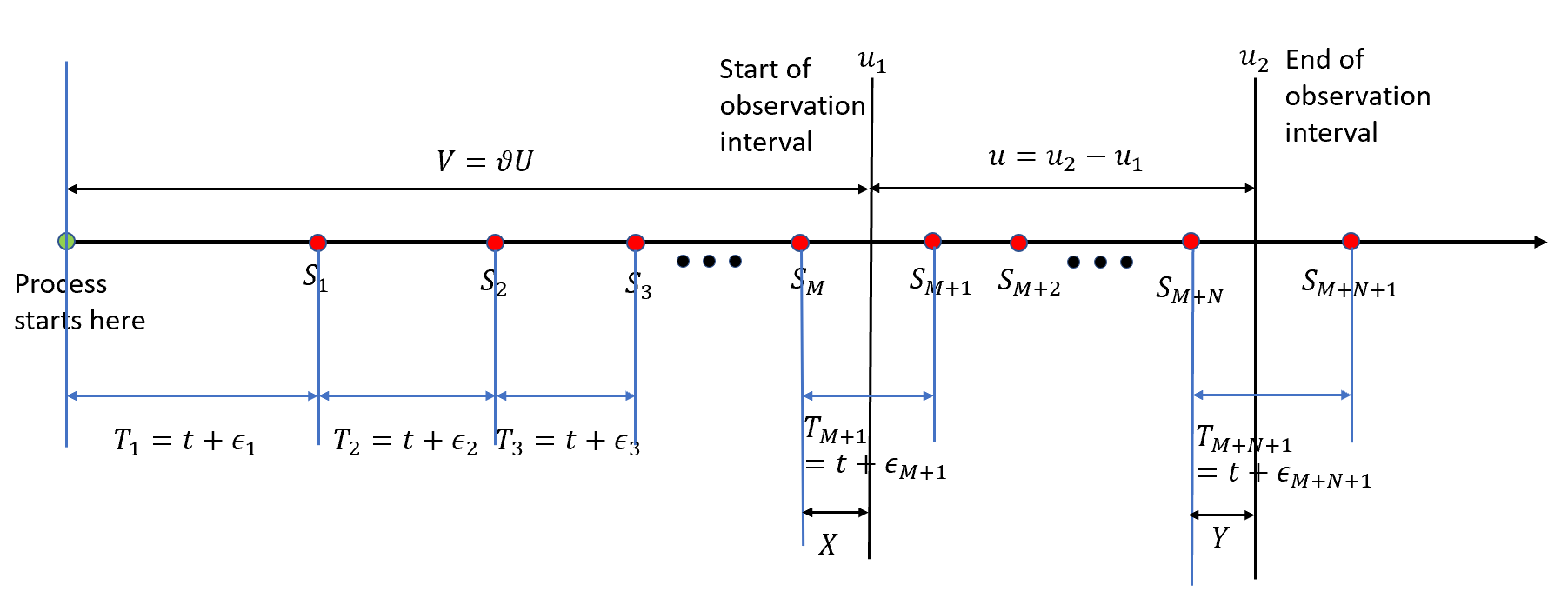}
  \caption{A generic point process with some observation window and associated random variables.}
  \label{fig:pt_process_observed}
\end{figure}

\subsection{Converting to deterministic}

We said we wanted to take the random point process and map it back somehow to the deterministic point process where the $\epsilon_i$'s are all zero. So, let's just make them all zero. In doing this, we'll be modifying various aspects of our existing point process (the times at which the events happen as well as the observation window) and converting it into a modified one. We'll show that the expected number of events in the modified point process within its modified observation window is the same as the original point process within its observation window.

We start with the very first event, (at time $S_1$). This event happens at a random time since the start of the process. If we remove the randomness by shifting the entire time line, we'll need to move it (event at $S_1$) and everything that happened after it back by $\epsilon_1$. This means the first event, the second event (at $S_2$), the third event (at $S_3$) and so forth, until infinity all move back by $\epsilon_{1}$. Once we do this, the time interval between the start of the process and the first event becomes a deterministic, $t$. But, this might also cause the events lying inside our observation interval to change. To prevent this, we move the entire observation interval (the start of the interval, $u_1$ and the end of it, $u_2$) back by $\epsilon_1$ as well. Since all the points as well as the observation interval moved by the same amount, the events inside this interval (as well of course, as the number of them), its size and every other aspect of the data falling within it (like the variables $X$ and $Y$) stays the same.

Now we don't move the event at $S_1$ anymore from here on out. To remove the randomness in the second interval ($T_2$), we repeat this process by moving the events at $S_2$, $S_3$ and so on as well as the observation interval goal posts ($u_1$ and $u_2$) back by $\epsilon_2$, and this makes the time elapsed between the events at $S_1$ and $S_2$, $t$ as well.

By the time we get to the event at $S_M$, we've moved it back by the amount: $\epsilon_1 + \epsilon_2 + \dots + \epsilon_{M}$. Here, we run into a problem. So far, we've been moving keeping the event at the start of the interval fixed, while moving the event at the end of the interval as well as all subsequent events and the goal-posts ($u_1$ and $u_2$) defining the interval. Moving everything like this by the same amount ensured we didn't disturb what went on inside the interval. Note that when we moved the event at $S_M$ back (and the start of our observation interval, $u_1$ with it), the event at $S_{M-1}$ wasn't moving. So, $u_1$ was potentially moving towards it (if $\epsilon_M$ were positive). There was however no danger that $u_1$ would actually touch the event at $S_{M-1}$, much less cross it, since it was guaranteed to stay ahead of the event at $S_M$ (since $S_M$ was moving with it) and $S_M$ was guaranteed to stay ahead of $S_{M-1}$. This  guarantee similarly applied to all events before $S_{M-1}$.

Now however, when we reach event the event at $S_M$ and move event at $S_{M+1}$ back, we lose this guarantee. There is a chance that as $u_1$ moves backward towards $S_M$, it might touch it and actually cross it. To avoid this, we don't move $u_1$.

So, when we move \#$M+1$ and all subsequent events back by $\epsilon_M$, we don't move $u_1$, instead moving only the end of the interval, $u_2$ back by this amount. There is now however a risk now that the event at $S_{M+1}$ crosses $u_1$ and goes outside our interval. If we denote by $X$, the random variable representing the time elapsed between $S_M$ and $u_1$ (shown in green in figure \ref{fig:pt_process_observed}), the probability that the event at $S_{M+1}$ will move outside our observation window in the new process is $P(X>t)$. This is because the time between $S_M$ and $S_{M+1}$ is going to become $t$ after the movement and if $X$ was greater than $t$, it'll have to mean that $S_{M+1}$ crossed the boundary $u_1$. 

After we make the times between $S_M$ and $S_{M+1}$ as well as $S_{M+1}$ and $S_{M+2}$ deterministic $t$, the time between events $S_M$ and $S_{M+2}$ will become $2t$. However, we know that $\epsilon_M$ can't be less than $-t$. So, it is guaranteed that $S_{M+2}$ will be inside the modified interval in the new point process, just as it was inside the interval for the original one. 

We now repeat the process for all events until the one at $S_{M+N-1}$, each time moving only the end of the interval, $u_2$ back by the same amount as well. It's easy to see that none of these operations will cause any events to enter or leave the interval. But finally, when we move $S_{M+N+1}$ back by amount $\epsilon_{M+N+1}$, we don't do the same for $u_2$. Now there is a chance that $S_{M+N+1}$ will enter our modified interval. Using similar reasoning as before, the probability of this happening is $P(Y>t)$ where $Y$ is the time from $u_2$ to $S_{M+N+1}$ (also shown in figure \ref{fig:pt_process_observed}). 

We then move all the events after $S_{M+N+1}$ back by the amounts that cause the time between them and the preceding event to become $t$. And in doing so, we don't change the number of events within our interval any further.

After doing all of this, we've transformed our original point process with random inter-arrival times to a deterministic point process with deterministic inter-arrival times ($t$ each). We know that events at $S_{M+1}$ through $S_{M+N}$ will be inside the interval for both the original and modified processes for sure. But the event at $S_{M+1}$ might go out of the interval with probability $P(X>t)$ while event at $S_{M+N+1}$ might come inside the interval with probability $P(Y>t)$. If these two probabilities are equal (which would mean $X$ and $Y$ are identically distributed), the expected number of events inside the interval for the original process will be the same as the expected number of events in the modified process with its modified interval. To see why this must be the case, we use proposition \ref{prop:blackwell_unif} which says that conditional on the interval our observation window lying in being $t_k$, the distribution of $X$ is uniform between $0$ and $t_k$. It is trivial to see that this must be true for $Y$ as well (since we only relied on $u_1$ being large in the proposition for the conditional uniformity to hold and $u_2>u_1$. Which means that $X$ and $Y$ are both mixtures of uniform distributions (with the minimum value of these uniform distributions being $0$ and the maximum value drawn from the distribution of $T_i$). This tells us that $X$ and $Y$ have the same distribution and so, the two probabilities in question ($P(X>t)$ and $P(Y>t)$) are equal as required\footnote{Note that we relied here on the observation intervals, $T_i$ being identically distributed. This could have been avoided by noting that even if the $T_i$ could follow different distributions (from a list of candidate distributions), we have no way of knowing which of those candidate distributions will apply to the interval containing $u_1$ and which to the one containing $u_2$ in advance.}. 

If we can now prove that the modified deterministic point process we end up with has $\frac{u}{t}$ events on average inside the modified interval, it will show that the original process has the same number of events on average in its interval and complete the proof of Blackwell's renewal theorem.

To recap, in the modified process, we start the interval not at $V$ but had to move it back by $\epsilon_1 + \epsilon_2 + \dots \epsilon_M$. In other words, the start of the interval, $u_1$ becomes:

$$u_1 = V - \sum\limits_{i=0}^M \epsilon_i$$

And the end of the interval, $u_2$ becomes:

$$u_2 = V + u - \sum\limits_{i=0}^{M+N} \epsilon_i$$

By proposition \ref{prop:deterministic_perturb}, we get that the expected number of events in this interval is $\frac{u}{t}$ as desired for the modified process.

Note that this mapping the general point process to a deterministic one is the same trick we applied in section \ref{why_well_lifetime} and we can justify conclusions about the expected events in the observation window of deterministic point process applying to the original point process for the following reasons:

\begin{itemize}
\item{Any realization of the original process maps bijectively to a realization of the deterministic process.}
\item{The probability of the realization of the original process is the same as the realization of the deterministic process (since it involves all the same random numbers).}
\item{The expected number of events falling into the respective intervals of the two processes stay the same.}
\end{itemize}

And this completes our proof for Blackwell's renewal theorem.

\section*{Acknowledgements}
I'd like to thank \href{https://math.stackexchange.com/users/355946/h-h-rugh}{HH Rugh}, \href{https://math.stackexchange.com/users/121671/oliver-diaz}{Oliver Diaz} and \href{https://math.stackexchange.com/users/758600/paresseux-nguyen}{Paresseux Nguyen}, users on the website, \href{https://math.stackexchange.com/}{math.stackexchange} who provided a lot of the proof's in this paper (I linked the posts where ever I was inspired by their answers). Sudarsan V Ranganathan pointed me to excellent resources on renewal theory so I could build some background. He also pointed out problems with an earlier version of the proof. Finally, I had some discussions with Adam Gustafson on point processes and what we can say about their failure rates, which motivated and triggered this exploration.

\begin{appendices}
\section{Python simulators}
\subsection{Simulator}\label{py:code}
Here, we provide some Python code demonstrating that if we have a random variable which can take value $0$ and $20$ with equal probabilities and have it as the distribution of the $T_i$'s, the average number of events falling into an interval of size $1$, started at a large uniform random number is $0.1$, as expected by the renewal theorem. 

\lstset{language=Python}
\lstset{frame=lines}
\lstset{caption={Simulator demonstrating the renewal theorem}}
\lstset{label={lst:simulator}}
\lstset{basicstyle=\footnotesize}
\begin{lstlisting}
import numpy as np

def sim_bimodal():
    catches = 0
    for _ in range(50000):
        j = np.random.uniform()*1000
        t_i = 0
        while t_i < j+100:
            if np.random.uniform() < 0.5:
                t_i += 0
            else:
                t_i += 20
            if j < t_i and t_i < j+1:
                catches += 1
    print(catches/50000)
\end{lstlisting}

\subsection{$E\left(\lfloor c-U \rfloor \right) = c-1$}
The following Python simulation shows $E\left(\lfloor c-U \rfloor \right) = c-1$

\lstset{language=Python}
\lstset{frame=lines}
\lstset{caption={Simulator demonstrating the conjecture}}
\lstset{label={lst:c-u}}
\lstset{basicstyle=\footnotesize}
\begin{lstlisting}
import numpy as np
# For example,
c = 3.2
np.mean([int(i) for i in c-np.random.uniform(size=10000)])
\end{lstlisting}

\section{Distribution of residual life}\label{large_unif}

Consider starting our observation interval at a large uniform random number, $V=mU$ (with $m$ being a large number like before and $U$ being a standard uniform random number between $0$ and $1$). We have the following proposition describing the time from the start of the interval to the first event thereafter:

\begin{proposition}
If we take a deterministic point process with inter-arrival time $t$ and start an observation window at a large uniform random number, the distribution of the time from the start of the window to the next event is uniform over $(0,t)$.
\end{proposition}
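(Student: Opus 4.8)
The plan is to reduce this statement to the sub-interval argument already used in the proposition on a large uniform number, since a deterministic process is just the case where every inter-arrival interval has the fixed size $t$. First I would observe that, by construction, the events of the deterministic point process occur at the times $0, t, 2t, 3t, \dots$, and these split the support $(0,m]$ of $V$ into the consecutive sub-intervals $(0,t], (t,2t], \dots, ((k-1)t, kt]$ with $k=\lfloor m/t\rfloor$, together with one final partial sub-interval $(\lfloor m/t\rfloor t,\ m]$ whose length is at most $t$.

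Next I would condition on which of these sub-intervals $V$ lands in. As recalled in the earlier proofs, the conditional distribution of a uniform random number, given that it falls inside a sub-interval completely contained in its support, is itself uniform over that sub-interval (this is immediate from Bayes' theorem). Hence, conditional on $V\in (jt,(j+1)t]$ for any $j$ with $(j+1)t\le m$, the variable $V$ is uniform on that interval, so the time from $V$ to the next event, which is exactly $(j+1)t - V$, is uniform on $[0,t)$. The key point is that this conditional law is the same for every such $j$.

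Finally I would dispose of the exceptional case: $P(V \in (\lfloor m/t\rfloor t,\ m]) = \frac{m - \lfloor m/t\rfloor t}{m} \le \frac{t}{m}$, which tends to $0$ as $m\to\infty$. Thus, in the limit, $V$ almost surely falls in one of the full sub-intervals, on each of which the residual time to the next event is uniform on $(0,t)$; mixing over the sub-intervals (all of which contribute the same conditional distribution) and letting the vanishing exceptional probability wash out, the unconditional distribution of the time to the next event converges to $U(0,t)$. Everything here is routine given the earlier results; the only step needing a little care is this limiting argument — making precise that the shrinking final partial sub-interval does not perturb the limiting law — which is exactly the device already employed for the general ``large uniform number'' proposition. (An equivalent route, if preferred, is to write the residual time as $t\big(1 - \operatorname{frac}(V/t)\big)$ with $V/t$ uniform on $(0,m/t]$ and invoke the standard fact that the fractional part of a uniform variable on an interval of growing length converges to $U(0,1)$.)
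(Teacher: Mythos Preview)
Your proposal is correct and follows essentially the same approach as the paper: partition the support of $V$ into full sub-intervals of length $t$ plus one partial remainder, observe that on each full sub-interval the residual time is exactly uniform on $(0,t)$, and bound the probability of landing in the partial remainder by $t/m\to 0$. The paper presents this via an explicit CDF computation of $\lceil Y_m\rceil - Y_m$ after rescaling so that events lie at the integers (your parenthetical ``equivalent route'' is in fact the paper's primary route), whereas you phrase it through conditioning and Bayes' theorem; the underlying argument is identical.
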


If $m \in \Bbb Z$, its easy to see that the proposition holds even for small $m$, so we consider the case $m\not\in\Bbb Z$. Further, without loss of generality, we can scale the time axis so that the deterministic events happen at integers (intervals of $1$, by scaling down by $t$). The proposition then reduces to: $mU-\lfloor mU \rfloor \sim U(0,1)$.

Then $Y_m=mV\sim U(0,m)$ and $Z_m=\lceil Y_m\rceil-Y_m\in[0,1)$. The distribution function of $Z_m$,

\begin{align*}
F_{Z_m}(x)&=P(\lceil Y_m\rceil-Y_m\le x)\\&=\begin{cases}0,&x\le0\\P(Y_m\in[k-x,k],k\in\Bbb N_{\le\lfloor m\rfloor}),&0<x\le \lceil m\rceil -m\\P(Y_m\in[k-x,k],k\in\Bbb N_{\le\lfloor m\rfloor})+P(\lceil m\rceil-x\le Y_m\le m),&\lceil m\rceil -m<x<1\\1,&x\ge1 
\end{cases}\end{align*}

Note that:

\begin{align*}
&P(Y_m\in[k-x,k],k\in\Bbb N_{\le\lfloor m\rfloor})+P(\lceil m\rceil-x\le Y_m\le m)\\&=\sum_{k=1}^{\lfloor m\rfloor}\int_{k-x}^k\frac{dy}m+\frac1m\int_{\lceil m\rceil -x}^mdy\\&=\frac{x\lfloor m\rfloor}m+\underbrace{\frac1m\int_{\lceil m\rceil -x}^mdy}_{\le\frac1m}
\end{align*}

so the underlined expression vanishes as $m\to\infty$ and the asymptotic behavior of the second and third cases is identical. Taking the limit as $m\to\infty$,

\begin{align*}
\lim_{m\to\infty}F_{Z_m}(x)&=\begin{cases}0,&x\le0\\\lim_{m\to\infty}\frac{x(m-\{m\})}m,&0<x<1\\1,&x\ge1 \end{cases}\\
&=\begin{cases}0,&x\le0\\x,&0<x<1\\1,&x\ge1 \end{cases}\\
&\sim U(0,1)
\end{align*}

where $\{\cdot\}$ is the fractional part function.

\begin{figure}
  \includegraphics[width=0.8\linewidth]{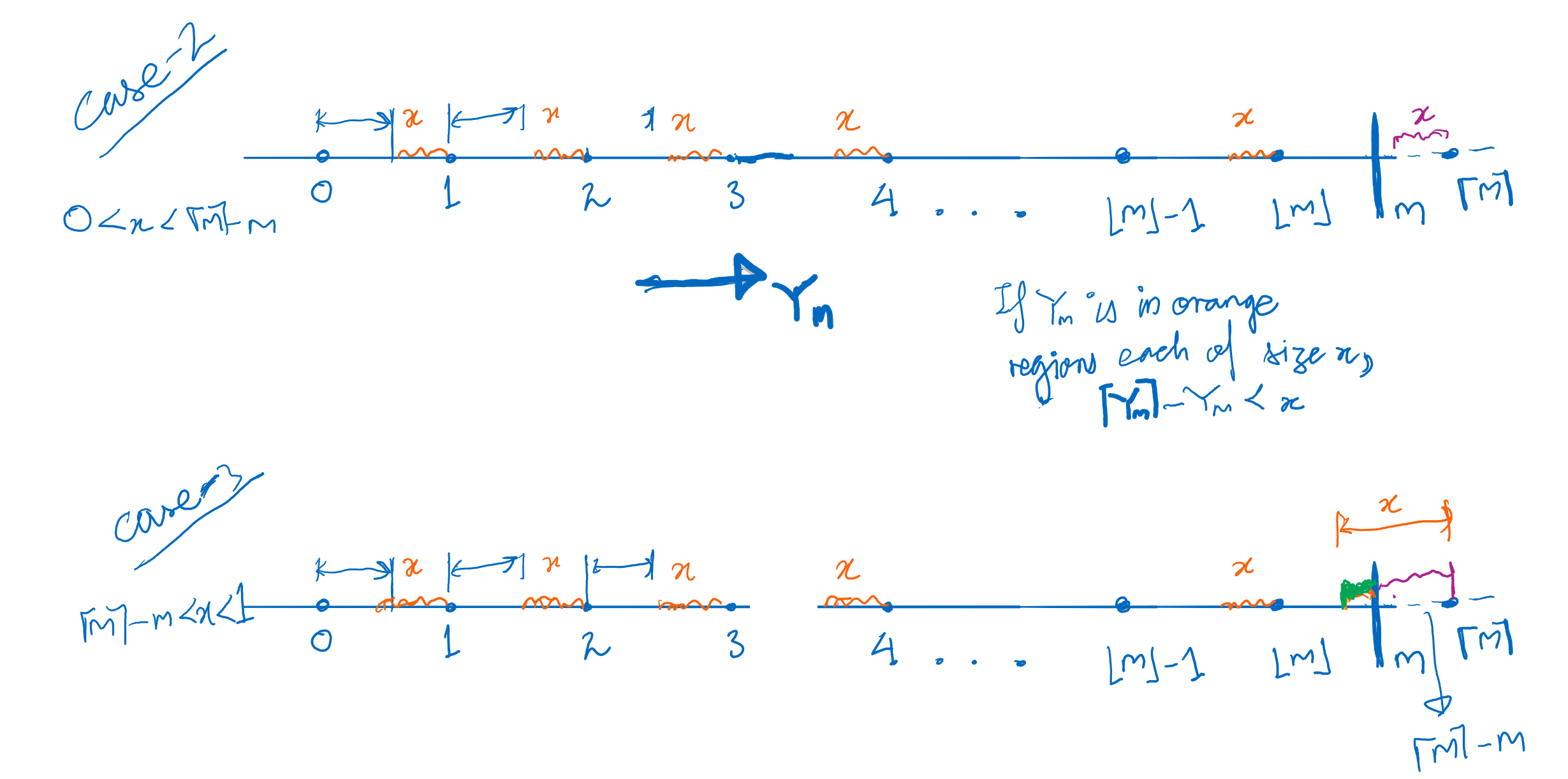}
  \caption{Time until next event when we arrive at a time which is a large uniform random number.}
  \label{fig:time_to_evnt}
\end{figure}

Refer to figure \ref{fig:time_to_evnt}. In case-2, we have to ignore the purple area. But in case-3, some of the purple area turns green and adds an additional term. The orange regions are common to both cases\footnote{A large part of this section is taken from \href{https://math.stackexchange.com/a/4139532/155881}{this page}.}. 

\section{Time until next event}
In this section, we'll explore the time from an observation point taken well into the lifetime of a point process (at $V$, a large uniform random number) to the next event immediately after. This is called the `residual time' for the point process.
Let's call $f_S(s)$ the PDF of the interarrival times of the original process. Consider figure \ref{fig:time_to_evnt_2}. Let's say the observation point lies between two events. Let's  label them \#$1$ and \#$2$. The time between these two events is $T$. What is the PDF of this $T$? We know that larger intervals are more likely to harbor the end point of $V$ inside them. And since $V$ is a large uniform, the likelihood increases linearly with the size of the interval. So, the PDF of $T$ will be proportional to $tf_T(t)$. If we include the normalizing term, this PDF becomes:

$$g_T(t) = \frac{tf_S(t)}{E(S)}$$

Where $E(S)$ is given by:

$$E(S) = \int_{t=0}^{\infty} s f_S(s) ds$$
Now, we want the distribution of $X$, the time from $V$ to event \#$2$.

\begin{figure}
  \includegraphics[width=0.8\linewidth]{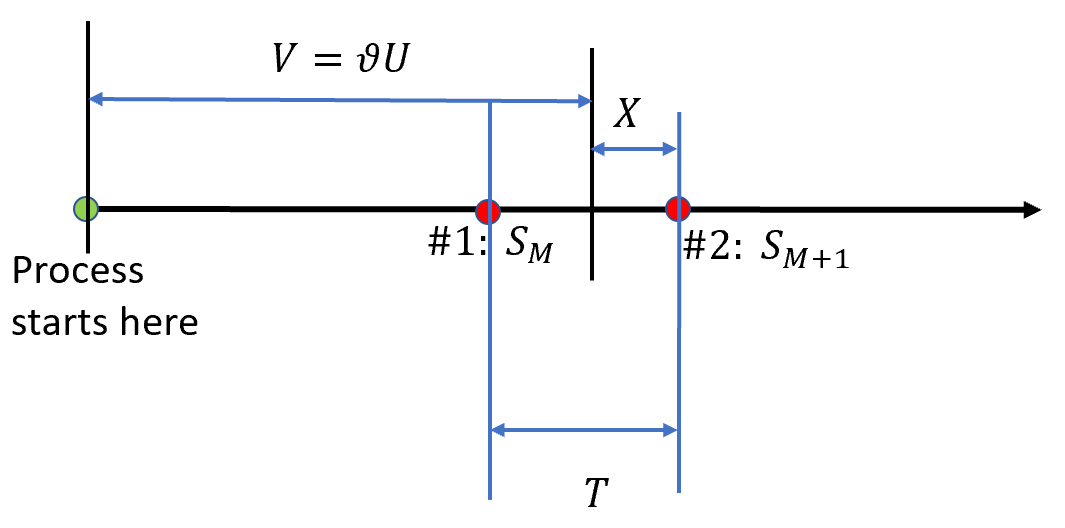}
  \caption{Residual life in a point process.}
  \label{fig:time_to_evnt_2}
\end{figure}

Consider: 

$$P(x<X<x+\delta x) = \int_{t=x+\delta x}^{\infty} P(x<X<x+\delta x | T=t) g_T(t) dt$$

$$ = \int_{t=x+\delta x}^{\infty} \frac{\delta x}{t} \frac{t f_T(t)}{E(T)}$$
$$ =  \frac{\delta x}{E(T)} P(T>x+\delta x)$$
Taking $\delta x$ to the other side and taking limits:

$$\lim_{\delta x \to 0} \frac{P(x<X<x+\delta x)}{\delta x} = \frac{P(T>x)}{E(T)}$$

$$h_X(x) = \frac{P(T>x)}{E(T)}$$

In other words, the PDF of $X$ is proportional to the survival function of $T$.

Consider the case of the Poisson process. We know here that $X$ must be exponentially distributed. And indeed, for the exponential distribution, the PDF is proportional to the survival function. In fact, since the exponential distribution is the only one that satisfies this property, we see that $X$ will have the same distribution as $S$ only for the Poisson process.

\end{appendices}
\end{document}